\newcommand{\bn}{{\bf n}}
\newcommand{\pT}{{\partial T}}
\def\bbQ{\mathbb{Q}}
\def\T{{\mathcal T}}
\def\E{{\mathcal E}}
\def\W{{\mathcal W}}
\def\l{{\langle}}
\def\r{{\rangle}}
\def\3bar{{|\hspace{-.02in}|\hspace{-.02in}|}}
\newtheorem{defi}{Definition}[section]
\newtheorem{algorithm}{Algorithm}
\newtheorem{remark}[theorem]{Remark}
\title{ Effective Implementation of the Weak Galerkin Finite Element
Methods for the Biharmonic Equation}
\author{Lin Mu\thanks{Computer Science and Mathematics Division
Oak Ridge National Laboratory, Oak Ridge, TN, 37831,USA
(mul1@ornl.gov). This research was supported in part by the U.S.~Department of Energy, Office of Science, Office of Advanced Scientific Computing Research, Applied Mathematics program under award number ERKJE45; and by the Laboratory Directed Research and Development program at the Oak Ridge National Laboratory, which is operated by UT-Battelle, LLC., for the U.S.~Department of Energy under Contract DE-AC05-00OR22725.}
\and Junping
Wang\thanks{Division of Mathematical Sciences, National Science
Foundation, Arlington, VA 22230 (jwang@\break nsf.gov). The research
of Wang was supported by the NSF IR/D program, while working at
National Science Foundation. However, any opinion, finding, and
conclusions or recommendations expressed in this material are those
of the author and do not necessarily reflect the views of the
National Science Foundation,} \and Xiu Ye\thanks{Department of
Mathematics, University of Arkansas at Little Rock, Little Rock, AR
72204 (xxye@ualr.edu). This research was supported in part by
National Science Foundation Grant DMS-1115097.}}
\begin{document}

\maketitle

\begin{abstract}

The weak Galerkin (WG) methods have been introduced in \cite{mwy,zz} for
solving the biharmonic equation. The purpose of this paper is to develop an algorithm to implement the WG methods effectively. This can be achieved by eliminating local unknowns to obtain a global system with significant reduction of size. In fact this reduced global system  is equivalent to the Schur complements of the WG methods. The unknowns of the Schur complement of the WG method are those defined on the element boundaries.  The equivalence of the WG method and its Schur complement is established. The numerical results demonstrate the effectiveness of this new implementation technique.
\end{abstract}

\begin{keywords}
weak Galerkin, finite element methods, weak Laplacian,
biharmonic equations, polyhedral meshes
\end{keywords}

\begin{AMS}
Primary, 65N15, 65N30, 76D07; Secondary, 35B45, 35J50
\end{AMS}
\pagestyle{myheadings}

\section{Introduction}

We consider the biharmonic equation of the form
\begin{eqnarray}
\Delta^2 u&=&f,\quad \mbox{in}\;\Omega,\label{pde}\\
u&=&g,\quad\mbox{on}\;\partial\Omega,\label{bc-d}\\
\frac{\partial u}{\partial
n}&=&g_n\quad\mbox{on}\;\partial\Omega.\label{bc-n}
\end{eqnarray}
\medskip
For the biharmonic problem (\ref{pde}) with Dirichlet and Neumann boundary conditions (\ref{bc-d}) and (\ref{bc-n}), the corresponding
variational form is given by seeking $u\in H^2(\Omega)$ satisfying
$u|_{\partial \Omega}=g$ and $\frac{\partial u}{\partial
n}|_{\partial \Omega}=\phi$ such that
\begin{equation}\label{wf}
(\Delta u, \Delta v) = (f, v),\qquad \forall v\in H_0^2(\Omega),
\end{equation}
where $H_0^2(\Omega)$ is the subspace of $H^2(\Omega)$ consisting of
functions with vanishing value and normal derivative on
$\partial\Omega$.

Conforming finite element methods for this fourth order equation
require  finite element spaces to be subspaces of $H^2(\Omega)$ or $C^1(\Omega)$.
Due to the complexity of construction of
$C^1$  elements, $H^2$ conforming methods are rarely used in
practice for solving the biharmonic equation. Due to this reason, many  nonconforming or discontinuous finite element methods have been developed for solving the biharmonic equation.
Morley element \cite{morley} is a well known nonconforming element
for the biharmonic equation for its simplicity. $C^0$ interior
penalty methods were studied in \cite{bs, ghlmt}. In \cite{ms}, a $hp$-version interior
penalty discontinuous Galerkin (DG) methods  were developed for the
biharmonic equation.

Weak Galerkin methods refer to general finite element techniques for partial differential equations and were  first introduced  in \cite{wy} for second order elliptic equations. They are by designing using discontinuous approximating functions on general meshes to avoid construction of complicated elements such as $C^1$ conforming elements.
In general, weak Galerkin finite element formulation can be derived directly from the variational form of the PDE by replacing the corresponding derivatives by the weak derivatives and adding a parameter independent stabilizer. Obviously, the WG method for the biharmonic equation should have the form
\begin{equation}\label{wf1}
(\Delta_w u_h, \Delta_w v)+s(u_h,v) = (f, v),
\end{equation}
where $s(\cdot,\cdot)$ is a parameter independent stabilizer. The WG formulation (\ref{wf1}) in its primary form is symmetric and positive definite.

The main idea of weak Galerkin finite
element methods is the use of weak functions and their corresponding weak derivatives in algorithm design.
For the biharmonic equations, weak function has the form  $v=\{v_0,v_b,v_n\}$  with $v=v_0$ inside of each element and $v=v_b$, $\nabla v\cdot\bn=v_n$ on the boundary of
the element.  In the weak Galerkin method introduced in \cite{mwy}, $v_0$ and $v_b$ are approximated by $k$th order polynomials and $v_n$ is approximated by the polynomial of order $k-1$. This method has been improved in \cite{zz} through polynomial order reduction where
$v_b$ and $v_n$ are both approximated by the polynomials of degree $k-1$.

Introductions of weak functions and weak derivatives make
the WG methods highly flexible. It also creates additional degrees of freedom associated with $v_b$ and $v_n$. The purpose of this paper is to develop an algorithm to implement the WG methods introduced in \cite{mwy,zz} effectively. This can be achieved by deriving the Schur complements of the WG methods and  eliminating the unknown $u_0$ from the globally coupled systems. Variables $u_b$ and $u_n$ defined on the element boundaries are the only unknowns of the Schur complements which significantly reduce globally coupled unknowns. We prove that the reduced system is symmetric and positive definite. The equivalence of the WG method and its Schur complement is also established. The results of this paper is based on the weak Galerkin method developed in \cite{zz}. The theory can also be applied to the WG method introduced in \cite{mwy} directly.

The paper is organized as follows. A weak Laplacian operator is introduced in Section \ref{Section:weak-Laplacian}. In Section \ref{Section:wg-fem}, we provide a  description for the  WG finite element scheme for the biharmonic equation introduced in \cite{zz}.
In Section \ref{Section:Schur}, a Schur complement formulation of the WG method is derived to reduce the cost in the implementation.
Numerical experiments are conducted in Section \ref{Section:numerical-results}.

\section{Weak Laplacian and discrete weak Laplacian}\label{Section:weak-Laplacian}

Let $T$ be any polygonal or polyhedral domain with boundary
$\partial T$. A {\em weak function} on the region $T$ refers to a
function $v=\{v_0, v_b, v_{n}\}$ such that $v_0\in L^2(T)$,
$v_b\in H^{\frac12}(\partial T)$, and $v_n\in
H^{-\frac12}(\partial T)$.  The first component $v_0$ can be understood as the value of $v$
in $T$ and the second and the third components $v_b$ and $v_n$ represent $v$ on $\pT$ and $\nabla v\cdot\bn$ on $\pT$,   where $\bn$ is the outward normal
direction of $T$ on its boundary. Note that $v_b$ and $v_n$ may not necessarily be related to
the trace of $v_0$  and $\nabla v_0\cdot\bn$ on $\partial K$ should  traces be well-defined.

Denote by $\W(T)$
the space of all weak functions on $T$; i.e.,
\begin{equation}\label{www}
\W(T) = \{v=\{v_0, v_b, v_n \}:\ v_0\in L^2(T),\; v_b\in
H^{\frac12}(\partial T), \ v_n\in H^{-\frac12}(\partial
T)\}.
\end{equation}
Let  $(\cdot,\cdot)_T$ stand for the $L^2$-inner product in
$L^2(T)$, $\langle\cdot,\cdot\rangle_\pT$ be the inner product in
$L^2(\pT)$. For convenience, define $G^2(T)$  as follows
$$
G^2(T)=\{\varphi: \ \varphi\in H^1(T), \Delta\varphi\in L^2(T)\}.
$$
It is clear that, for any $\varphi\in G^2(T)$, we have
$\nabla\varphi\in H(div,T)$. It follows that
$\nabla\varphi\cdot\bn\in  H^{-\frac12}(\partial T)$ for any
$\varphi\in G^2(T)$.

\medskip

\begin{defi}(Weak Laplacian)
The dual of $L^2(T)$ can be identified with itself by using the
standard $L^2$ inner product as the action of linear functionals.
With a similar interpretation, for any $v\in \W(T)$, the {\em weak
Laplacian} of $v=\{v_0, v_b,v_n \}$ is defined as a linear
functional $\Delta_w v$ in the dual space of $G^2(T)$ whose action
on each $\varphi\in G^2(T)$ is given by
\begin{equation}\label{wl}
(\Delta_w v, \ \varphi)_T = (v_0, \ \Delta\varphi)_T-\l v_b,\ \nabla\varphi\cdot\bn\r_\pT +\l
v_n, \ \varphi\r_\pT,
\end{equation}
where $\bn$ is the outward normal direction to $\partial T$.
\end{defi}

The Sobolev space $H^2(T)$ can be embedded into the space $\W(T)$ by
an inclusion map $i_\W: \ H^2(T)\to \W(T)$ defined as follows
$$
i_\W(\phi) = \{\phi|_{T}, \phi|_{\partial T}, \nabla\phi|\cdot\bn_{\partial T}\},\qquad \phi\in H^2(T).
$$
With the help of the inclusion map $i_\W$, the Sobolev space $H^2(T)$
can be viewed as a subspace of $\W(T)$ by identifying each $\phi\in
H^2(T)$ with $i_\W(\phi)$. Analogously, a weak function
$v=\{v_0,v_b,v_n\}\in \W(T)$ is said to be in $H^2(T)$ if it can be
identified with a function $\phi\in H^2(T)$ through the above
inclusion map. It is not hard to see that the weak Laplacian is
identical with the strong Laplacian (i.e., $\Delta_w v=\Delta v$) for
smooth functions $v\in H^2(T)$.

Next, we introduce a discrete weak Laplacian operator by
approximating $\Delta_w$ in a polynomial subspace of the dual of
$G^2(T)$. To this end, for any non-negative integer $r\ge 0$, denote
by $P_{r}(T)$ the set of polynomials on $T$ with degree no more than
$r$.

\begin{defi} (Discrete Weak Laplacian)
A discrete weak Laplacian operator, denoted by $\Delta_{w,r,T}$,
is defined as the unique polynomial $\Delta_{w,r,T}v \in P_r(T)$ that
satisfies the following equation
\begin{equation}\label{dwl}
(\Delta_{w,r,T} v, \ \varphi)_T = ( v_0, \ \Delta\varphi)_T-\l v_b,\
\nabla\varphi\cdot\bn\r_\pT +\l v_n, \ \varphi\r_\pT,\quad
\forall \varphi\in P_r(T).
\end{equation}
\end{defi}

\section{Weak Galerkin Finite Element Methods}\label{Section:wg-fem}

Let ${\cal T}_h$ be a
partition of the domain $\Omega$ consisting of polygons in 2D or
polyhedra in 3D. Assume that ${\cal T}_h$ is shape regular in the
sense that a set of conditions defined in \cite{wy-mixed} are
satisfied. Denote by ${\cal E}_h$ the set of all edges or flat faces in ${\cal
T}_h$, and let ${\cal E}_h^0={\cal E}_h\backslash\partial\Omega$ be
the set of all interior edges or flat faces.

Since $v_n$ represents $\nabla v\cdot\bn$, obviously, $v_n$  is dependent on  $\bn$.
To ensure $v_n$ a single value function on $e\in\E_h$,
we introduce a set of normal directions on ${\cal E}_h$ as follows
\begin{equation}\label{thanks.101}
{\cal D}_h = \{\bn_e: \mbox{ $\bn_e$ is unit and normal to $e$},\
e\in {\cal E}_h \}.
\end{equation}
For a given integer $k\ge 2$, let $V_h$ be the weak Galerkin finite
element space associated with $\T_h$ defined as follows
\[
V_h=\{v=\{v_0,v_b, v_{n}\}:\ v_0|_T\in P_{k}(T),  v_b|_e\in
P_{k-1}(e),
 v_{n}|_e\in P_{k-1}(e), e\subset\partial T\},
\]
where $v_n$ can be viewed as an approximation of $\nabla v\cdot\bn_e$.
Denote by $V_h^0$ a subspace of $V_h$ with vanishing traces; i.e.,
$$
V_h^0=\{v=\{v_0,v_b,v_{n}\}\in V_h, {v_b}|_e=0,\ {v_{n}}|_e=0,\
e\subset\partial T\cap\partial\Omega\}.
$$
Denote by $\Delta_{w,k-2}$ the discrete weak Laplacian operator on
the finite element space $V_h$ computed by using
(\ref{dwl}) on each element $T$ for $k\ge 2$; i.e.,
$$
(\Delta_{w,k-2}v)|_T =\Delta_{w,k-2, T} (v|_T),\qquad \forall v\in
V_h.
$$
For simplicity of notation, from now on we shall drop the subscript
$k-2$ in the notation $\Delta_{w,k-2}$ for the discrete weak Laplacian.

For each element $T\in \T_h$, denote by $Q_0$ the $L^2$ projection
from $L^2(T)$ to $P_k(T)$ and by $Q_b$ the $L^2$ projection from
$L^2(e)$ to $P_{k-1}(e)$. Denote by $\bbQ_h$ the $L^2$ projection
onto the local discrete gradient space $P_{k-2}(T)$. Now for any $u\in H^2(\Omega)$, we
can define a projection into the finite element space $V_h$ such
that on the element $T$, we have
$$
Q_h u = \{Q_0 u,\;Q_bu,\; Q_b (\nabla u\cdot\bn_e)\}.
$$
We also introduce the following notation
$$
(\Delta_w v,\;\Delta_w w)_h=\sum_{T\in {\cal T}_h}(\Delta_w
v,\;\Delta_w w)_T.
$$
For any $u_h=\{u_0,u_b,u_{n}\}$ and
$v=\{v_0,v_b,v_{n}\}$ in $V_h$, we introduce a stabilizer as
follows
\[
s(u_h, v)=\sum_{T\in\T_h} h_T^{-1}\langle \nabla u_0\cdot\bn_e-u_{n}, \
\nabla v_0\cdot\bn_e-v_{n}\rangle_\pT + \sum_{T\in\T_h} h_T^{-3}\langle
Q_b u_0-u_{b},Q_b v_0-v_{b}\rangle_\pT.
\]
In the definition above, the first term is to enforce the connections between normal derivatives of $u_0$ along $\bn_e$ and its approximation $u_n$. Now we can define the bilinear form for the weak Galerkin formulation,
\begin{equation}\label{a-form}
a(v,w)=(\Delta_w v,\ \Delta_w w)_h+s(v,\ w).
\end{equation}

\begin{algorithm}\label{algorithm1} (WG method)
A numerical approximation for (\ref{pde})-(\ref{bc-n}) can be
obtained by seeking $u_h=\{u_0,\;u_b,\ u_{n}\}\in V_h$
satisfying $u_b=Q_b g$ and $u_{n}=Q_bg_n(\bn\cdot\bn_e)$ on $\partial \Omega$
and the following equation:
\begin{equation}\label{wg}
a(u_h,\ v)=(f,\;v_0), \quad\forall\
v=\{v_0,\; v_b,\ v_{n}\}\in V_h^0.
\end{equation}
\end{algorithm}

Define a mesh-dependent semi norm $\3bar\cdot\3bar$ in the finite element
space $V_h$ as follows
\begin{equation}\label{3barnorm}
\3bar v\3bar^2=a(v,\ v)=(\Delta_wv,\ \Delta_w v)_h+s(v,\;v),\qquad v\in V_h.
\end{equation}

It has been proved in \cite{zz} that $\3bar\cdot\3bar$ is a norm in $V_h^0$ and therefore the weak Galerkin Algorithm \ref{algorithm1} has a unique solution.

\begin{theorem} Let $u_h\in V_h$  be the weak Galerkin finite element solution arising from
(\ref{wg}) with finite element functions of order $k\ge 2$. Assume
that the exact solution of (\ref{pde})-(\ref{bc-n} ) is sufficient
regular such that $u\in H^{\max\{k+1,4\}}(\Omega)$. Then, there exists a
constant $C$ such that
\begin{equation}\label{err1}
\3bar u_h-Q_hu\3bar \le Ch^{k-1}
\left(\|u\|_{k+1}+h\delta_{k-2,0}\|u\|_{4}\right).
\end{equation}
\end{theorem}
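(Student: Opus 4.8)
The plan is to run the by-now standard energy-norm argument for weak Galerkin schemes in three steps, starting from a commutativity property of the projection $Q_h$. \emph{Step 1 (commutativity of $Q_h$ and $\Delta_w$).} I will first show that on each $T\in\T_h$
\begin{equation*}
\Delta_{w,k-2,T}(Q_hu)=\bbQ_h(\Delta u)|_T ,
\end{equation*}
where $\bbQ_h$ denotes the $L^2(T)$-projection onto $P_{k-2}(T)$. This follows directly from the defining identity (\ref{dwl}) applied to $Q_hu=\{Q_0u,Q_bu,Q_b(\nabla u\cdot\bn_e)\}$: for any test $\varphi\in P_{k-2}(T)$ one has $\Delta\varphi\in P_k(T)$ and $\nabla\varphi\cdot\bn|_e,\ \varphi|_e\in P_{k-1}(e)$, so $Q_0$ and $Q_b$ may be dropped in the respective pairings, after which two applications of Green's formula collapse the right-hand side of (\ref{dwl}) to $(\Delta u,\varphi)_T$. (The discrepancy between the fixed normal $\bn_e$ and the element outward normal $\bn$ only produces a sign, which is carried consistently.)

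\emph{Step 2 (error equation).} Set $e_h=Q_hu-u_h$; the boundary conditions imposed in Algorithm \ref{algorithm1}, together with $u=g$ and $\nabla u\cdot\bn=g_n$ on $\partial\Omega$, give $e_h\in V_h^0$. Starting from $a(u_h,v)=(f,v_0)$ for $v\in V_h^0$, I would write $a(Q_hu,v)=(\bbQ_h\Delta u,\Delta_w v)_h+s(Q_hu,v)$ using Step 1, replace $(\bbQ_h\Delta u,\Delta_w v)_T$ by $(\Delta u,\Delta_w v)_T$ (legitimate since $\Delta_w v\in P_{k-2}(T)$), invoke (\ref{dwl}) together with Green's formula for $v_0\in P_k(T)\subset H^2(T)$, and finally integrate $(\Delta^2u,v_0)_T=(f,v_0)_T$ by parts twice. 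Combining, and using that $\Delta u$ and $\nabla(\Delta u)$ are single-valued on interior edges (so their pairings against the single-valued $v_b,v_n$ cancel pairwise) while $v_b=v_n=0$ on $\partial\Omega$, I expect the error equation
\begin{equation*}
a(e_h,v)=s(Q_hu,v)+\sum_{T\in\T_h}\langle \nabla v_0\cdot\bn-v_n,\ \Delta u-\bbQ_h\Delta u\rangle_{\pT}-\sum_{T\in\T_h}\langle v_0-v_b,\ \nabla(\Delta u-\bbQ_h\Delta u)\cdot\bn\rangle_{\pT},\qquad v\in V_h^0 .
\end{equation*}

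\emph{Step 3 (estimation).} Taking $v=e_h$ gives $\3bar e_h\3bar^2$ equal to the right-hand side above evaluated at $e_h$, so it remains to bound each summand by $Ch^{k-1}\big(\|u\|_{k+1}+h\,\delta_{k-2,0}\|u\|_4\big)\3bar e_h\3bar$. For $s(Q_hu,e_h)$ I would use Cauchy--Schwarz in $s(\cdot,\cdot)$, the bound $s(e_h,e_h)^{1/2}\le\3bar e_h\3bar$, and the approximation properties of $Q_0,Q_b$ together with the usual trace and inverse inequalities on shape-regular elements to get $s(Q_hu,Q_hu)^{1/2}\le Ch^{k-1}\|u\|_{k+1}$. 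For the two boundary sums I would (i) estimate $\|\Delta u-\bbQ_h\Delta u\|_{L^2(\pT)}$ and $\|\nabla(\Delta u-\bbQ_h\Delta u)\cdot\bn\|_{L^2(\pT)}$ by combining the approximation property of $\bbQ_h$ on $P_{k-2}(T)$ with the trace inequality, and (ii) absorb the jump factors $\|\nabla e_{h,0}\cdot\bn-e_{h,n}\|_{L^2(\pT)}$ and $\|e_{h,0}-e_{h,b}\|_{L^2(\pT)}$ into $\3bar e_h\3bar$ --- the former through the $h_T^{-1}$-weighted term of $s(\cdot,\cdot)$, the latter by splitting $e_{h,0}-e_{h,b}=(e_{h,0}-Q_be_{h,0})+(Q_be_{h,0}-e_{h,b})$ and treating the second piece with the $h_T^{-3}$-weighted term of $s(\cdot,\cdot)$ and the first with an inverse estimate on $e_{h,0}$ controlled by the norm property of $\3bar\cdot\3bar$ on $V_h^0$ established in \cite{zz}. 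Matching the powers of $h_T$ and summing (with a Cauchy--Schwarz over the elements) then yields the bound $Ch^{k-1}\|u\|_{k+1}\3bar e_h\3bar$ when $k\ge3$. When $k=2$ the range $P_{k-2}(T)=P_0(T)$ of $\Delta_{w,k-2}$ consists of constants only, so $\nabla\bbQ_h\Delta u\equiv0$, $\bbQ_h\Delta u$ is merely the elementwise mean of $\Delta u$, and the last boundary sum can no longer be matched against $\|u\|_3$; controlling it then requires the $L^2(\pT)$-trace of $\nabla\Delta u$, bounded by $Ch_T^{-1/2}\|u\|_{4,T}$, and this is exactly the mechanism producing the extra $h\,\delta_{k-2,0}\|u\|_4$ term. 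Dividing by $\3bar e_h\3bar$ completes the proof.

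\emph{Main obstacle.} The technical heart is Step 3: one must juggle several $h_T$-scalings so that, after Cauchy--Schwarz, every stabilizer-type factor is soaked up by $\3bar e_h\3bar$ while the projection-error factors leave precisely the order $h^{k-1}$ (and so that the degenerate low-order case $k=2$ costs only the stated extra power of $h$ and regularity). Controlling $\|e_{h,0}-e_{h,b}\|_{L^2(\pT)}$ by $\3bar e_h\3bar$, which relies on the norm characterization from \cite{zz}, is the subtle point; the sign bookkeeping between $\bn_e$ and $\bn$ in Step 2, though elementary, also has to be done carefully.
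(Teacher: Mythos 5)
The paper itself contains no proof of this theorem: it is stated as a known result for the scheme of Zhang and Zhai and is simply quoted from \cite{zz} (the present paper's contribution is the Schur-complement implementation, not the error analysis). So there is nothing in the source to compare your argument against line by line; the comparison has to be with the standard analysis in \cite{zz,mwy}. Measured against that, your outline is the right one and follows essentially the same route: the commutativity identity $\Delta_{w,k-2,T}(Q_hu)=\bbQ_h(\Delta u)|_T$ obtained by dropping the projections against polynomial test functions and applying Green's formula twice, the resulting error equation for $e_h=Q_hu-u_h$ with the stabilizer term plus the two boundary consistency sums involving $\Delta u-\bbQ_h\Delta u$ and $\nabla(\Delta u-\bbQ_h\Delta u)\cdot\bn$, and the cancellation across interior edges using single-valuedness of $v_b,v_n$. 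Your explanation of the $\delta_{k-2,0}$ term is also the correct mechanism: for $k=2$ the projection $\bbQ_h$ lands in $P_0(T)$, so the term involving $\nabla(\Delta u-\bbQ_h\Delta u)$ gains no approximation order and must be absorbed via the trace inequality, whose two-term right-hand side produces exactly the $h\|u\|_3$ and $h^2\|u\|_4$ contributions. What you have written is a plan rather than a proof --- Step 3 still requires the explicit trace, inverse, and projection-approximation estimates with the $h_T^{-1}$ and $h_T^{-3}$ weights matched term by term, and the sign bookkeeping between $\bn_e$ and $\bn$ in the third component of $Q_hu$ needs to be done explicitly --- but every ingredient you name is the one actually used in the cited analysis, and no step of the plan would fail.
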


\begin{theorem}
Let $u_h\in V_h$ be the weak Galerkin finite element solution
arising from (\ref{wg}) with finite element functions of order
$k\ge 3$. Assume that the exact solution of
(\ref{pde})-(\ref{bc-n} ) is sufficient regular such that $u\in
H^{k+1}(\Omega)$ with $k\ge 3$. Then, there exists a constant $C$ such that
\begin{equation}\label{err2}
\|Q_0u-u_0\| \le Ch^{k+1}\|u\|_{k+1}.
\end{equation}
\end{theorem}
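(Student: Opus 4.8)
The plan is to establish the $L^2$-error estimate $\|Q_0u - u_0\| \le Ch^{k+1}\|u\|_{k+1}$ via a duality argument (Aubin–Nitsche trick) adapted to the weak Galerkin framework. First I would set up the auxiliary dual problem: given the error function $e_0 = Q_0 u - u_0$, consider the biharmonic problem $\Delta^2 \psi = e_0$ in $\Omega$ with homogeneous Dirichlet and Neumann boundary conditions $\psi = 0$, $\partial\psi/\partial n = 0$ on $\partial\Omega$. By elliptic regularity for the biharmonic operator on a (sufficiently smooth or convex polygonal) domain, one obtains $\|\psi\|_4 \le C\|e_0\|$.

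Next I would derive the error equation for the weak Galerkin scheme. Let $e_h = Q_h u - u_h = \{e_0, e_b, e_n\}$; using the commuting-type properties of the projections $Q_0, Q_b, \bbQ_h$ relative to the discrete weak Laplacian (namely that $\Delta_w Q_h u$ approximates $Q_{k-2}(\Delta u)$ with a controllable residual), one shows that $a(e_h, v) = \ell_u(v)$ for all $v \in V_h^0$, where $\ell_u(\cdot)$ is a residual functional collecting the consistency error terms, each bounded by $Ch^{k-1}\|u\|_{k+1}\,\3bar v\3bar$ — this is essentially the content behind the energy estimate (\ref{err1}) and can be quoted or re-derived from \cite{zz}. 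Then I would test the error equation with $v = Q_h\psi \in V_h$ (which lies in $V_h^0$ since $\psi$ has vanishing traces), obtaining $a(e_h, Q_h\psi) = \ell_u(Q_h\psi)$.

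The heart of the argument is then to relate both sides to $\|e_0\|^2$. On one hand, integrating by parts and using the definition of the weak Laplacian together with $\Delta^2\psi = e_0$, one expands $a(Q_h\psi, e_h)$ and identifies the leading term as $(\Delta\psi, \Delta e_0)_h$-type expressions that, after re-integration by parts against the weak function components of $e_h$, reduce to $(e_0, e_0) = \|e_0\|^2$ modulo higher-order remainder terms. These remainder terms — coming from the stabilizer $s(Q_h\psi, e_h)$ and from the mismatch between $\Delta_w Q_h\psi$ and $Q_{k-2}(\Delta\psi)$ — must be bounded by $Ch^2\|\psi\|_4\,\3bar e_h\3bar$, using approximation properties of $Q_0, Q_b, \bbQ_h$ and a trace/inverse inequality on shape-regular elements. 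On the other hand, the right-hand side $\ell_u(Q_h\psi)$ is bounded by $Ch^{k-1}\|u\|_{k+1}\,\3bar Q_h\psi\3bar$, and one must show the extra gain: $\3bar Q_h\psi\3bar \le Ch^2\|\psi\|_4 \le Ch^2\|e_0\|$, which follows because $Q_h\psi$ is the projection of a function with zero boundary data so the stabilizer and weak-Laplacian terms are of size $h^2\|\psi\|_4$ rather than $O(1)$. Actually a sharper bookkeeping is needed: one shows $\ell_u(Q_h\psi) \le Ch^{k+1}\|u\|_{k+1}\|e_0\|$ by pairing each $h^{k-1}$ consistency term from $\ell_u$ with an $h^2$ factor from the smallness of the relevant components of $Q_h\psi$. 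Combining, $\|e_0\|^2 \le Ch^{k+1}\|u\|_{k+1}\|e_0\| + Ch^2\|e_0\|\,\3bar e_h\3bar$, and invoking (\ref{err1}) to bound $\3bar e_h\3bar \le Ch^{k-1}\|u\|_{k+1}$ yields the claim after dividing by $\|e_0\|$.

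The main obstacle I anticipate is the careful accounting of the two $h^2$ gains in the duality step — specifically, proving that the consistency functional $\ell_u$ evaluated at $Q_h\psi$ genuinely gains two orders of $h$ beyond the energy estimate. This requires exploiting the fact that $\psi$ has vanishing trace and normal derivative on $\partial\Omega$, together with element-wise approximation estimates for $Q_0 \psi - \psi$, $Q_b\psi - \psi$, $\bbQ_h(\Delta\psi) - \Delta\psi$, and matching them term-by-term against the structure of each consistency term in $\ell_u$; getting the powers of $h_T$ to balance correctly in the weighted norms of the stabilizer (the $h_T^{-1}$ and $h_T^{-3}$ weights) is where the bookkeeping is most delicate and where the requirement $k\ge 3$ (rather than $k\ge 2$) will enter.
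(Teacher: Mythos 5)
The paper you are checking against does not actually prove this theorem: it is stated without proof and is imported from the reference \cite{zz} (Zhang and Zhai), since the paper's own contribution is the Schur-complement implementation, not the error analysis. So there is no ``paper proof'' to match your argument against line by line. That said, your outline is the standard duality (Aubin--Nitsche) route by which such $L^2$ estimates for WG discretizations of the biharmonic problem are in fact proven, and in broad strokes it is the right plan: dual problem $\Delta^2\psi=e_0$ with homogeneous Dirichlet and Neumann data, error equation $a(e_h,v)=\ell_u(v)$, test with $Q_h\psi$, and pair each $O(h^{k-1})$ consistency term with an $O(h^2)$ gain coming from the components of $Q_h\psi$.

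Two points deserve correction or emphasis. First, your intermediate claim $\3bar Q_h\psi\3bar\le Ch^2\|\psi\|_4$ is false: the discrete $H^2$-type norm of $Q_h\psi$ is comparable to $\|\psi\|_2$, not small. You partially retract this yourself (``a sharper bookkeeping is needed''), and the retracted version --- bounding $\ell_u(Q_h\psi)$ term by term, exploiting that the quantities entering $\ell_u$ are differences such as $Q_b(Q_0\psi)-Q_b\psi$ and $\nabla Q_0\psi\cdot\bn - Q_b(\nabla\psi\cdot\bn)$, each of which does gain two powers of $h$ against the weights $h_T^{-1}$ and $h_T^{-3}$ --- is the correct mechanism; the first formulation should simply be deleted. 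Second, the whole argument hinges on the $H^4$ regularity estimate $\|\psi\|_4\le C\|e_0\|$ for the clamped biharmonic problem, which holds only on sufficiently smooth or otherwise restricted domains; it fails, for instance, on the L-shaped domain of the paper's Example~3. This hypothesis is not stated in the theorem and must be added for your proof to go through; the cited source \cite{zz} makes the analogous regularity assumption. With those repairs your sketch is a faithful reconstruction of the expected argument, including the correct final combination $\|e_0\|^2\le Ch^{k+1}\|u\|_{k+1}\|e_0\|+Ch^2\|e_0\|\,\3bar e_h\3bar$ together with the energy bound (\ref{err1}).
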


\section{The Schur Complement of the WG Method}\label{Section:Schur}

To reduce the number of globally coupled unknowns of the WG method  (\ref{wg}), its Schur complement will be derived by eliminating $u_0$.
To start the local elimination procedure, denote
by $V_h(T)$ the restriction of $V_h$ on $T$, i.e.
\[
V_h(T)=\{v=\{v_0,v_b,v_n\}\in V_h^0, v({\bf x})= 0,\; \mbox{for}\; {\bf x}\notin T\}.
\]

\begin{algorithm}\label{algorithm2}(The Schur Complement of the WG Method)
An approximation for
(\ref{pde})-(\ref{bc-n}) is given by seeking $w_h=\{w_0,\;w_b,\ w_{n}\}\in V_h$
satisfying $w_b=Q_b g$ and $w_{n}=Q_bg_n(\bn\cdot\bn_e)$ on $\partial \Omega$
and a global equation
\begin{equation}\label{wg1}
a(w_h,\ v)=0, \quad\forall\
v=\{0,\; v_b,\ v_{n}\}\in V_h^0,
\end{equation}
and a local system on each element $T\in \T_h$,
\begin{eqnarray}
a(w_h,\ v)=(f,\;v_0), \quad\forall\ v=\{v_0,\; 0,\ 0\}\in V_h(T).\label{wg2}
\end{eqnarray}
\end{algorithm}

\begin{remark}
Algorithm \ref{algorithm2} consists two parts: a local system (\ref{wg2}) solved on each element $T\in\T_h$ for eliminating $w_0$ and a global system (\ref{wg1}). The global system (\ref{wg1}) has $w_b$ and $w_n$ as its only unknowns that will reduces the number of the unknowns from the WG system (\ref{wg}) by half.
\end{remark}

\begin{theorem}\label{thm-h}
Let $w_h=\{w_0, w_b,w_n\}\in V_h$  and $u_h=\{u_0,u_b,u_n\}\in V_h$  be the solutions of Algorithm \ref{algorithm2} and Algorithm \ref{algorithm1} respectively. Then we have
\begin{equation}\label{main}
w_h=u_h.
\end{equation}
\end{theorem}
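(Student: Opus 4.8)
The plan is to show that the solution $u_h$ of Algorithm \ref{algorithm1} also satisfies the defining equations of Algorithm \ref{algorithm2}, and then invoke uniqueness. First I would observe that $u_h\in V_h$ satisfies the boundary conditions $u_b=Q_bg$ and $u_n=Q_bg_n(\bn\cdot\bn_e)$ on $\partial\Omega$ by construction, so the boundary constraints of Algorithm \ref{algorithm2} are met automatically. Next, the key structural remark is that any test function $v=\{v_0,v_b,v_n\}\in V_h^0$ decomposes as a sum $v = \{0,v_b,v_n\} + \sum_{T\in\T_h}\{v_0|_T,0,0\}$, where each summand lies either in $V_h^0$ with vanishing interior part or in one of the local spaces $V_h(T)$. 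Since \eqref{wg} holds for all $v\in V_h^0$, restricting to test functions of the form $\{0,v_b,v_n\}$ gives \eqref{wg1} with $w_h$ replaced by $u_h$ (the right-hand side $(f,v_0)$ vanishes because $v_0=0$), and restricting to test functions $\{v_0,0,0\}\in V_h(T)\subset V_h^0$ gives \eqref{wg2} with $w_h$ replaced by $u_h$. Hence $u_h$ solves Algorithm \ref{algorithm2}.

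For the converse direction — that $w_h$ solves Algorithm \ref{algorithm1} — I would run the decomposition in reverse: given any $v=\{v_0,v_b,v_n\}\in V_h^0$, write $v = \{0,v_b,v_n\} + \sum_T\{v_0|_T,0,0\}$, apply \eqref{wg1} to the first piece and \eqref{wg2} on each element to the remaining pieces, and add. Bilinearity of $a(\cdot,\cdot)$ then yields $a(w_h,v) = 0 + \sum_T (f,v_0|_T) = (f,v_0)$, which is exactly \eqref{wg}. Thus $w_h$ is a solution of Algorithm \ref{algorithm1} satisfying the same boundary data. Since $\3bar\cdot\3bar$ is a norm on $V_h^0$ (quoted from \cite{zz}), the WG problem \eqref{wg} has a unique solution for given boundary data; therefore $w_h=u_h$, which is \eqref{main}.

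The only genuinely delicate point is making the decomposition of test functions rigorous: one must check that $\{0,v_b,v_n\}$ and each $\{v_0|_T,0,0\}$ genuinely belong to the spaces claimed — in particular that $\{v_0|_T,0,0\}\in V_h(T)$ requires $v_0|_T\in P_k(T)$ (immediate) and that extending by zero keeps it in $V_h^0$, which is fine because its boundary components vanish identically. One should also confirm that $a(\cdot,\cdot)$ genuinely splits as a sum over elements, so that pairing $w_h$ against a function supported on a single $T$ only sees the local contributions; this follows since both $(\Delta_w\cdot,\Delta_w\cdot)_h$ and $s(\cdot,\cdot)$ are defined as sums of element-wise (or $\partial T$-wise) terms. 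Once these bookkeeping points are in place, the argument is essentially a two-line linearity computation plus the uniqueness already available from \cite{zz}; I do not anticipate any analytic obstacle beyond this careful but routine verification.
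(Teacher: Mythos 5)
Your proposal is correct and follows essentially the same route as the paper: decompose each test function $v\in V_h^0$ into $\{v_0,0,0\}+\{0,v_b,v_n\}$, use linearity of $a(\cdot,\cdot)$ together with \eqref{wg1} and \eqref{wg2} to conclude that $w_h$ satisfies \eqref{wg}, and then invoke the uniqueness of the WG solution from \cite{zz}. The forward direction you include (that $u_h$ solves Algorithm \ref{algorithm2}) is not needed for the conclusion, but it is harmless; otherwise your argument matches the paper's, only with the bookkeeping spelled out more explicitly.
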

\begin{proof}
For any $v=\{v_0,v_b,v_n\}\in V_h^0$, we have $v=\{v_0,0,0\}+\{0,v_b,v_n\}$. Therefore it is easy to see that $w_h$ is a solution of WG method (\ref{wg}). The uniqueness of the WG method proved in \cite{zz} implies $w_h=u_h$ which proved the theorem.
\end{proof}

For given $w_b$, $w_n$ and $f$, let $w_h$ be the unique solution of the local system
(\ref{wg2}) which is  a function of $w_b$, $w_n$ and $f$,
\begin{equation}\label{ttt}
w=w(w_b,w_n,f)=\{w_{0}(w_b,w_n,f),w_b,w_n\}\in V_h(T).
\end{equation}
Superposition implies
\begin{equation}\label{sp}
w(w_b,w_n,f)=w(w_b,w_n,0)+w(0,0,f).
\end{equation}

Using the equation (\ref{sp}), (\ref{wg1}) becomes
\begin{equation}\label{wg3}
a(w(w_b,w_n,0),v)=-a(w(0,0,f),v),\quad\forall v=\{0,v_b,v_n\}\in V_h^0.
\end{equation}

\smallskip

\begin{lemma}
System (\ref{wg1})  is symmetric and positive definite.
\end{lemma}

\smallskip

\begin{proof}
Since the system (\ref{wg1}) is equivalent to (\ref{wg3}), we will prove that the system (\ref{wg3}) is symmetric and positive definite.
It follows from the definition of $w(w_b,w_n,0)$ and (\ref{wg2})  that
\begin{equation}\label{ttt1}
a(w(w_b,w_n,0),v)=0,\quad\forall v=\{v_0,0,0\}\in V_h(T).
\end{equation}

Combining the equation (\ref{ttt}) with $f=0$ and (\ref{ttt1}) implies that for all $v=\{0,v_b,v_n\}\in V_h^0$
\[
a(w(w_b,w_n,0),v)= a(w(w_b,w_n,0),v(v_b,v_n,0)),
\]
which implies that the system (\ref{wg3}) is symmetric. Next we will prove that $v_b=v_n=0$ for any $v\in V_h^0$ if
$$a(v(v_b,v_n,0),v(v_b,v_n,0))=0.$$
It has been proved in \cite{zz,zzw} that $\3bar v \3bar^2=a(v,v)=0$ implies $v=0$ if $v\in V_h^0$. Thus we have $v=0$.
 The uniqueness of the system (\ref{wg2}) implies $v_b=v_n=0$.  We have proved the lemma.
\end{proof}

\section{Numerical Experiments}\label{Section:numerical-results}
This section shall present several numerical experiments to illustrate the HWG algorithm devised in this article.

The numerical experiments are conducted in the weak Galerkin finite element space:
$$V_h=\{v=\{v_0,v_b,v_{n}\bn_e\}, \ v_0\in P_{k}(T),\ v_b\in P_{k-1}(e),\ v_{n}\in P_{k-1}(e), T\in\mathcal{T}_h, e\in\mathcal{E}_h\}.$$
For any given $v=\{v_0,v_b,v_{n}\bn_e\}\in V_h$, its discrete weak Laplacian, $\Delta_wv\in P_{k-2}(T)$, is computed locally by the following equation
$$(\Delta_w v,\psi)_T=(v_0,\Delta\psi)_T+\langle v_{n}{\bf n}_e\cdot{\bf n},\psi\rangle_{\partial T}-\langle v_b,\nabla\psi\cdot{\bf n}\rangle_{\partial T},\ \forall\psi\in P_{k-2}(T).$$
The error for the WG solution will be measured in two norms defined as follows:
\begin{eqnarray}
|||v_h|||^2:&=&\sum_{T\in\mathcal{T}_h}\bigg(\int_T|\Delta_w v_h|^2dx+h_T^{-1}\int_{\partial T}|(\nabla v_0-v_{n}{\bf n}_e)\cdot{\bf n}|^2ds \\
&&+h_T^{-3}\int_{\partial T}(Q_bv_0-v_b)^2ds\bigg),\qquad\qquad\  (\mbox{A discrete $H^2$-norm}),\notag\\
\|v_h\|^2:&=&\sum_{T\in\mathcal{T}_h}\int_T|v_0|^2dx,\qquad\qquad\qquad\qquad  (\mbox{Element-based $L^2$-norm}).
\end{eqnarray}

In the following setting, we will choose $k=2$ and $k=3$ for testing. %According to \ref{err1}-\ref{err2},

\begin{table}[h]
\caption{Example 1. Convergence rate with $k=2$.}\label{ex1_k2}
\center
\begin{tabular}{||c||cccc||}
\hline\hline
$h$ & $\3bar u_h-Q_h u\3bar$ & order & $\|u_0-Q_0 u\|$ & order \\
\hline\hline
   1/4    &2.4942e-01 &            &3.3400e-02 &        \\ \hline
   1/8    &1.3440e-01 &8.9202e-01  &9.1244e-03 &1.8720  \\ \hline
   1/16   &7.2244e-02 &8.9562e-01  &2.6093e-03 &1.8061  \\ \hline
   1/32   &3.8252e-02 &9.1734e-01  &7.3363e-04 &1.8305  \\ \hline
   1/64   &1.9681e-02 &9.5877e-01  &1.9488e-04 &1.9125  \\ \hline
   1/128  &9.9257e-03 &9.8753e-01  &4.6501e-05 &2.0673  \\ \hline
\hline
\end{tabular}
\end{table}

\begin{table}[h]
\caption{Example 1. Convergence rate with $k=3$.}\label{ex1_k3}
\center
\begin{tabular}{||c||cccc||}
\hline\hline
$h$ & $|||u_h-Q_h u|||$ & order & $\|u_0-Q_0 u\|$ & order \\
\hline\hline
   1/4    &6.2092e-02 &        &4.9565e-03 &        \\ \hline
   1/8    &2.2944e-02 &1.4363  &4.6283e-04 &3.4208  \\ \hline
   1/16   &6.8389e-03 &1.7463  &3.7550e-05 &3.6236  \\ \hline
   1/32   &1.7486e-03 &1.9676  &2.4198e-06 &3.9559  \\ \hline
   1/64   &4.3878e-04 &1.9946  &1.5181e-07 &3.9946  \\ \hline
   1/128  &1.0983e-04 &1.9982  &8.9374e-09 &4.0862  \\ \hline
\hline
\end{tabular}
\end{table}

\subsection{Example 1}
Consider the forth order problem that seeks an unknown function $u=u(x,y)$ satisfying
$$
-\Delta^2 u=f
$$
in the square domain $\Omega=(0,1)^2$ with homogeneous Dirichlet
boundary condition. The exact solution is given by $u(x,y)=x^2(1-x)^2y^2(1-y^2)$, and the function $f=f(x,y)$ is given to match the
exact solution.

The HWG algorithm is performed on a sequence of uniform triangular meshes. The mesh is constructed as follows: 1) partition the domain $\Omega$ into $n\times n$ sub-rectangles; 2) divide each square element into two triangles by the diagonal line with a negative slope. We denote the mesh size as $h=1/n.$

Table \ref{ex1_k2} present the error profiles with the mesh size $h$ for $k=2.$ Here, it is observed that $\3bar u_h-Q_h u\3bar$ converges to zero at the optimal rate $O(h)$ as the mesh is refined. The third column in Table \ref{ex1_k2} shows the convergence rate of $\|u_0-Q_0u\|$ is at sub-optimal rate $O(h^2).$ Secondly, in Table \ref{ex1_k3} we investigate the same problem for $k=3.$ It shows that the $\3bar u_h-Q_hu\3bar$ and $\|u_h-Q_0u\|$ are converged at the rate of $O(h^2)$ and $O(h^4)$, which validate the theoretical conclusion in (\ref{err1})-(\ref{err2}).

\begin{table}[h]
\caption{Example 2. Convergence rate with $k=2$.}\label{ex2_k2}
\center
\begin{tabular}{||c||cccc||}
\hline\hline
$h$ & $\3bar u_h-Q_h u\3bar$ & order & $\|u_0-Q_0 u\|$ & order \\
\hline\hline
   1/4    &1.1977e+01 &            &1.5977     &        \\ \hline
   1/8    &6.3606     &9.1305e-01  &4.2748e-01 &1.9020  \\ \hline
   1/16   &3.3570     &9.2199e-01  &1.1740e-01 &1.8644  \\ \hline
   1/32   &1.7395     &9.4854e-01  &3.1336e-02 &1.9056  \\ \hline
   1/64   &8.8243e-01 &9.7910e-01  &8.0433e-03 &1.9620  \\ \hline
   1/128  &4.4185e-01 &9.9793e-01  &2.0110e-03 &1.9999  \\ \hline
\hline
\end{tabular}
\end{table}

\begin{table}[h]
\caption{Example 2. Convergence rate with $k=3$.}\label{ex2_k3}
\center
\begin{tabular}{||c||cccc||}
\hline\hline
$h$ & $\3bar u_h-Q_h u\3bar$ & order & $\|u_0-Q_0 u\|$ & order \\
\hline\hline
   1/4    &3.9757     &        &3.7061e-01 &        \\ \hline
   1/8    &1.2465     &1.6734  &3.0620e-02 &3.5973  \\ \hline
   1/16   &3.5336e-01 &1.8186  &2.2781e-03 &3.7486  \\ \hline
   1/32   &9.1275e-02 &1.9528  &1.4426e-04 &3.9811  \\ \hline
   1/64   &2.3058e-02 &1.9849  &8.9582e-06 &4.0093  \\ \hline
   1/128  &5.7870e-03 &1.9944  &5.5593e-07 &4.0102  \\ \hline
\hline
\end{tabular}
\end{table}

\subsection{Example 2} Let $\Omega=(0,1)^2$ and exact solution $u(x,y)=\sin(\pi x)\sin(\pi y)$. The boundary conditions $g$, $g_n$, and $f$ are given to match the exact solution.

Similarly, the uniform triangular mesh is used for testing. Table \ref{ex2_k2}-Table \ref{ex2_k3} present the error for $k=2$ and $k=3$ respectively. We can observe the convergence rates measured in $\3bar u_h-Q_h u\3bar$ and $\|u_0-Q_0u\|$ are $O(h)$, $O(h^2)$ for $k=2$, and $O(h^2)$, $O(h^4)$ for $k=3$.

\begin{table}[h]
\caption{Example 3. Convergence rate with $k=2$.}\label{ex3_k2}
\center
\begin{tabular}{||c||cccc||}
\hline\hline
Mesh & $\3bar u_h-Q_h u\3bar$ & order & $\|u_0-Q_0 u\|$ & order \\
\hline\hline
   Level 1    &1.1606e-01 &            &8.7536e-03 &        \\ \hline
   Level 2    &7.3245e-02 &6.6404e-01  &1.9367e-03 &2.1763  \\ \hline
   Level 3    &4.5864e-02 &6.7538e-01  &4.8418e-04 &2.0000  \\ \hline
   Level 4    &2.8804e-02 &6.7108e-01  &1.8253e-04 &1.4074  \\ \hline
   Level 5    &1.8143e-02 &6.6684e-01  &7.0204e-05 &1.3785  \\ \hline
   Level 6    &1.1453e-02 &6.6372e-01  &2.7002e-05 &1.3785  \\ \hline
\hline
\end{tabular}
\end{table}

\begin{table}[h]
\caption{Example 3. Convergence rate with $k=3$.}\label{ex3_k3}
\center
\begin{tabular}{||c||cccc||}
\hline\hline
Mesh & $|||u_h-Q_h u|||$ & order & $\|u_0-Q_0 u\|$ & order \\
\hline\hline
   Level 1    &3.8466e-02 &            &3.2977e-03 &        \\ \hline
   Level 2    &2.4167e-02 &6.7053e-01  &6.4411e-04 &2.3561  \\ \hline
   Level 3    &1.5215e-02 &6.6757e-01  &1.4313e-04 &2.1699  \\ \hline
   Level 4    &9.5852e-03 &6.6659e-01  &5.3960e-05 &1.4074  \\ \hline
   Level 5    &6.0386e-03 &6.6659e-01  &2.0596e-05 &1.3896  \\ \hline
   Level 6    &3.8042e-03 &6.6662e-01  &7.8013e-06 &1.4005  \\ \hline
\hline
\end{tabular}
\end{table}

\subsection{Example 3}
In this example, we investigate the performance of the HWG method for a problem with a corner singularity. Let $\Omega$ be the L-shaped domain $(-1,1)^2\backslash[0,1)\times(-1,0]$ and impose an appropriate inhomogeneous boundary condition for $u$ so that
$$u=r^{5/3}\sin(5\theta/3),$$
where $(r,\theta)$ denote the system of polar coordinates. In this test, the exact solution $u$ has a singularity at the origin; here, we only have $u\in H^{8/3-\epsilon}(\Omega),$ $\epsilon>0.$
\begin{figure}[h!]
 \begin{center}
\resizebox*{5cm}{5cm}{\includegraphics{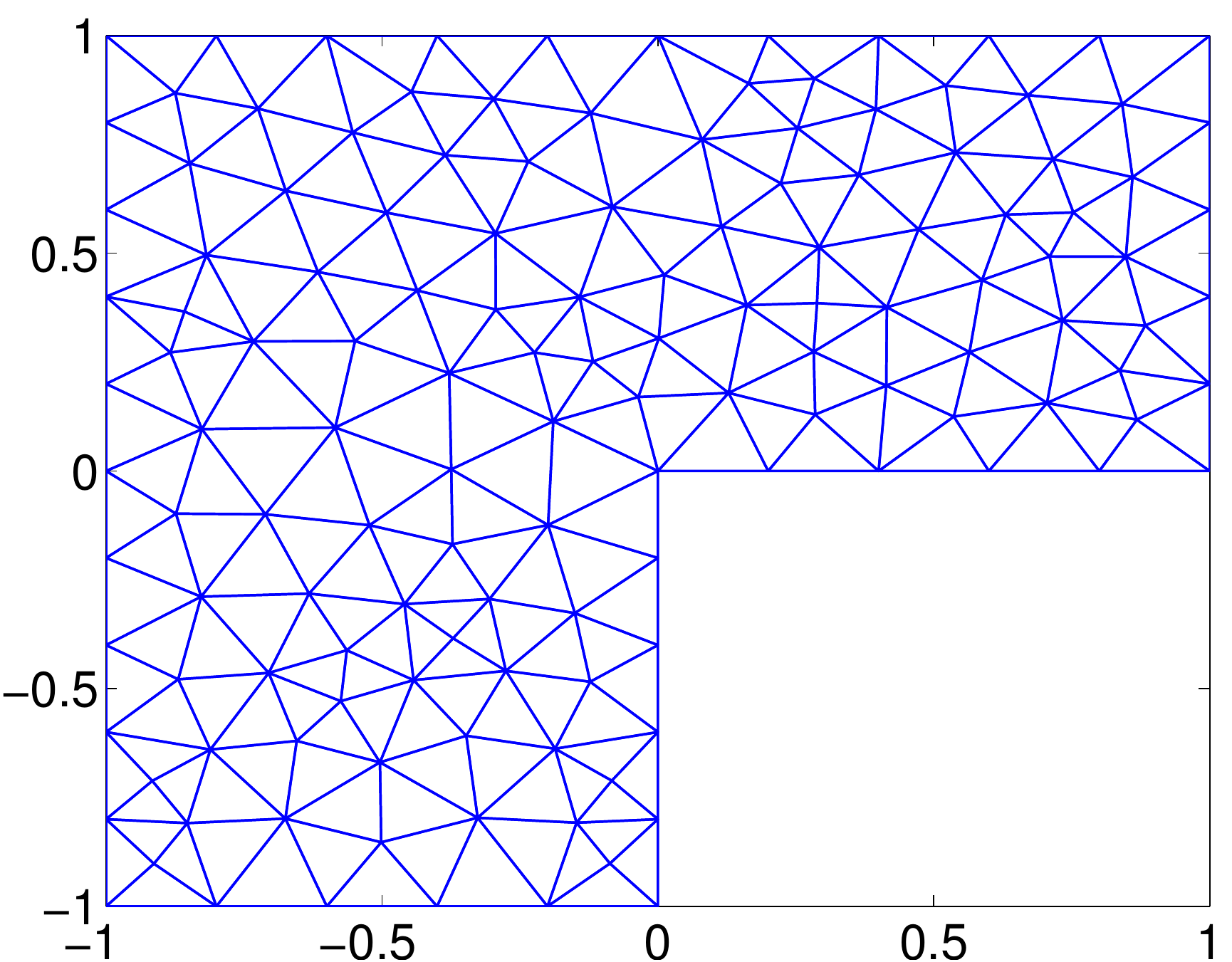}}
\caption{Initial mesh of Example 3}
\label{ex3_fig}
\end{center}
\end{figure}

The initial mesh is shown in Figure \ref{ex3_fig}. The next level of mesh is derived by connecting middle point of each edge for the previous level of mesh. The error of numerical solution is shown in Table \ref{ex3_k2}-\ref{ex3_k3} for $k=2$ and $k=3.$ Here we can observe that $\3bar u_h-Q_h u\3bar$ approaches to zero at the rate $O(h^{2/3})$ as $h\to 0.$ However, the convergence rate of error in $L^2-norm$ is observed as $O(h^{1.4})$.

\end{document}